\title[Liouville type theorem for elliptic inequality]{A note on Liouville type theorem of elliptic inequality $\Delta u+u^\sigma\ls 0$ on Riemannian manifolds}
\author{Hui-Chun Zhang}
\address{Department of Mathematics\\  Sun Yat-sen University\\ Guangzhou 510275\\ E-mail address: zhanghc3@mail.sysu.edu.cn}
\newtheorem{thm}{Theorem}[section]
\newtheorem{lem}[thm]{Lemma}
\theoremstyle{definition}
\theoremstyle{remark}
\newtheorem{rem}[thm]{Remark}
\numberwithin{equation}{section}
\newcommand{\ls}{\leqslant}
\newcommand{\gs}{\geqslant}
\newcommand{\ip}[2]{\left<{#1},{#2}\right>}
\newcommand{\R}{\mathbb{R}}
\begin{document}

\begin{abstract}
Let $\sigma>1$ and let $M$ be a complete Riemannian manifold. In a very recent work \cite{gs13}, Grigor$^{\prime}$yan and Sun proved that a Liouville type theorem for nonnegative solutions of elliptic inequality
$$(*)\quad\qquad\qquad\qquad \Delta u(x)+u^\sigma(x)\ls0,\qquad x\in M.\quad\qquad \qquad\qquad $$
via a pointwise condition of volume growth of geodesic balls. In this note, we improve their result to that an \emph{integral condition} on volume growth implies the same uniqueness of ($*$). It is inspired by the well-known Varopoulos-Grigor$^{\prime}$yan's criterion for parabolicity of $M$.
\end{abstract}
\maketitle

\section{Introduction}

Let $\sigma>1$ and let $M$ be a complete noncompact Riemannian manifold without boundary.
Consider the semilinear elliptic inequality
\begin{equation}\label{eq1.1}
\Delta u(x)+u^\sigma(x)\ls0, \quad x\in M,
\end{equation}
 where $\Delta$ is the Laplace-Beltrami opertor on $M$.
 A function $u\in W^{1,2}_{\rm loc}(M)$  is called a \emph{weak solution} of the inequality \eqref{eq1.1} if
$$-\int_M\ip{\nabla u}{\nabla \psi}d\mu+\int_Mu^\sigma\psi d\mu\ls0$$
holds for any nonnegative function $\psi\in W^{1,2}(M)$ with compact support.

In Euclidean setting, i.e. $M=\R^n$, it has a long history to study the uniqueness of nonnegative solutions for \eqref{eq1.1} (or more general elliptic inequations and equalities). There are many beautiful results have been obtained in this subject. We refer the readers to, for instance, \cite{agq13,bp01,cgs94,cmp09,p09,ser72,sz02} and references therein for them. Many of these results are based on comparison principle and careful choices of test functions for \eqref{eq1.1}. To use this method on a manifold $M$, one have to estimate the second order derivative of distance functions, which needs some assumptions on curvature of $M$.

 Surprisingly, in recent works  Grigor$^{\prime}$yan-Kondratiev \cite{gk10} and Grigor$^{\prime}$yan-Sun \cite{gs13} proved a \emph{curvature-free} Liouville type theorem for nonnegative weak solution of \eqref{eq1.1} in terms of  volume growth of geodesic balls in $M$ as follows.

 \begin{thm}[Grigor$^{\prime}$yan-Sun \cite{gs13}]
Let $M$ be a complete  Riemannian manifold without boundary. Fix a point $x_0\in M$ and set
$V(r):=\mu \big(B(x_0,r)\big)$
the volume of geodesic ball of radius $r$ centered at $x_0$.

 Assume that, for some  $C>0$, the inequality
\begin{equation}\label{eq1.2}
V(r)\ls C r^\frac{2\sigma}{\sigma-1}(\ln r)^\frac{1}{\sigma-1}
\end{equation}
holds for all large enough $r$.
Then any nonnegative weak solution of \eqref{eq1.1} is identically equal to 0.
\end{thm}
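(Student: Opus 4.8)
The plan is to combine the nonlinear‑capacity (test‑function) method with the Varopoulos--Grigor$'$yan parabolicity machinery. Recall that $u^{\sigma}\in L^{1}_{\mathrm{loc}}(M)$ is implicit in the weak formulation, so every integral below over a relatively compact set is finite; we argue by contradiction, assuming $u\not\equiv0$. \emph{Step 1 (a self‑improving Caccioppoli inequality).} For $\gamma\in(0,1)$ put $q=q(\gamma):=\tfrac{2(\sigma-\gamma)}{\sigma-1}$ (note $q\ge2$), and for a Lipschitz cutoff $0\le\phi\le1$ with compact support insert $\psi=\phi^{q}(u+\varepsilon)^{-\gamma}$, $\varepsilon>0$, into the weak inequality. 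Expanding $\langle\nabla u,\nabla\psi\rangle$, the term $\gamma\int\phi^{q}(u+\varepsilon)^{-\gamma-1}|\nabla u|^{2}$ is absorbed by Cauchy--Schwarz; letting $\varepsilon\downarrow0$ and applying Hölder with exponents $\tfrac{\sigma-\gamma}{1-\gamma},\tfrac{\sigma-\gamma}{\sigma-1}$ on $\{\nabla\phi\ne0\}$ gives
\[
\int_{M}u^{\sigma-\gamma}\phi^{q}\,d\mu\le\frac{q^{2}}{2\gamma}\Bigl(\int_{\{\nabla\phi\ne0\}}u^{\sigma-\gamma}\phi^{q}\,d\mu\Bigr)^{\frac{1-\gamma}{\sigma-\gamma}}\Bigl(\int_{M}|\nabla\phi|^{q}\,d\mu\Bigr)^{\frac{\sigma-1}{\sigma-\gamma}}.
\]
(The negative power of $u$ is essential: $\Delta u+u^{\sigma}\le0$ gives only lower bounds on $\int|\nabla u|^{2}$, so an ordinary Caccioppoli estimate is unavailable.)

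\emph{Step 2 (capacity bound: consuming the logarithm, and integrability).} Taking $\phi\equiv1$ on $B(x_{0},\rho)$, bounding the boundary integral by $\int_{M}u^{\sigma-\gamma}\phi^{q}$ and dividing, we get $\int_{B(x_{0},\rho)}u^{\sigma-\gamma}\,d\mu\le\bigl(\tfrac{q^{2}}{2\gamma}\bigr)^{\frac{\sigma-\gamma}{\sigma-1}}\int_{M}|\nabla\phi|^{q}\,d\mu$. Estimate the right side with a Varopoulos--Grigor$'$yan cutoff: $\phi\equiv1$ on $B(x_{0},\rho)$, vanishing off $B(x_{0},2^{N}\rho)$, decreasing by $\delta_{j}$ on the dyadic annulus $B(x_{0},2^{j+1}\rho)\setminus B(x_{0},2^{j}\rho)$, so $\int_{M}|\nabla\phi|^{q}\le\sum_{j}(2^{j}\rho)^{-q}\delta_{j}^{q}V(2^{j+1}\rho)$. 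Feeding in \eqref{eq1.2} and optimising in $(\delta_{j})$ yields $\bigl(\sum_{j}a_{j}^{-1/(q-1)}\bigr)^{-(q-1)}$ with $a_{j}=(2^{j}\rho)^{\frac{2\gamma}{\sigma-1}}\bigl(\ln 2^{j+1}\rho\bigr)^{\frac{1}{\sigma-1}}$; by the identity $(\sigma-1)(q-1)=\sigma+1-2\gamma$, as $\gamma\downarrow0$ the powers of $2^{j}$ disappear and the surviving series $\sum_{j}\bigl(\ln 2^{j+1}\rho\bigr)^{-1/(\sigma+1)}$ diverges --- this is precisely where the factor $(\ln r)^{\frac{1}{\sigma-1}}$ of \eqref{eq1.2} is used up. Carrying out the arithmetic, $\int_{M}|\nabla\phi|^{q(\gamma)}\lesssim\gamma^{(\sigma-2\gamma)/(\sigma-1)}$ (letting $N\to\infty$), so Step 1 gives $\int_{B(x_{0},\rho)}u^{\sigma-\gamma}\,d\mu\lesssim\gamma^{-\gamma/(\sigma-1)}\to1$ as $\gamma\downarrow0$, uniformly in $\rho$; dominated convergence as $\gamma\downarrow0$ then gives $\int_{B(x_{0},\rho)}u^{\sigma}\,d\mu\le C(\sigma)$ for every $\rho$, hence $u^{\sigma}\in L^{1}(M)$.

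\emph{Step 3 (closing the argument).} Re‑run Step 1 with a Varopoulos--Grigor$'$yan cutoff whose transition region lies outside $B(x_{0},\rho)$, this time bounding the boundary integral by Hölder with exponents $\bigl(\tfrac{\sigma}{\sigma-\gamma},\tfrac{\sigma}{\gamma}\bigr)$:
\[
\int_{\{\nabla\phi\ne0\}}u^{\sigma-\gamma}\phi^{q}\,d\mu\le\Bigl(\int_{M\setminus B(x_{0},\rho)}u^{\sigma}\,d\mu\Bigr)^{\frac{\sigma-\gamma}{\sigma}}\Bigl(\int_{\{\nabla\phi\ne0\}}\phi^{q\sigma/\gamma}\,d\mu\Bigr)^{\frac{\gamma}{\sigma}}.
\]
Although $\int\phi^{q\sigma/\gamma}$ may grow like $\exp(c/\gamma)$ as $\gamma\downarrow0$, it enters only to the power $\gamma/\sigma$, so that factor stays bounded; combined with the capacity bound of Step 2, the coefficient $\tfrac{q^{2}}{2\gamma}\bigl(\int|\nabla\phi|^{q}\bigr)^{\frac{\sigma-1}{\sigma-\gamma}}$ also stays bounded as $\gamma\downarrow0$. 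Letting $\gamma\downarrow0$ we obtain $\int_{B(x_{0},\rho)}u^{\sigma}\,d\mu\le C\bigl(\int_{M\setminus B(x_{0},\rho)}u^{\sigma}\,d\mu\bigr)^{1/\sigma}$. Since $u^{\sigma}\in L^{1}(M)$, the right side tends to $0$ as $\rho\to\infty$ while the left side increases to $\int_{M}u^{\sigma}\,d\mu$; hence $\int_{M}u^{\sigma}\,d\mu=0$, i.e. $u\equiv0$, a contradiction.

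The main obstacle is the role of the parameter $\gamma$: it cannot be set to $0$ in the test function (it produces the very term that is absorbed), yet for every fixed $\gamma>0$ the exponent $q(\gamma)$ lies strictly below the critical value $\tfrac{2\sigma}{\sigma-1}$ and the relevant capacities stay bounded away from $0$, so nothing is concluded at fixed $\gamma$. One must therefore run the whole scheme with $\gamma>0$ and let $\gamma\downarrow0$ only at the end, keeping under control the competition between the blow‑up $\sim\gamma^{-1}$ of the constant in Step 1 and the capacity decay in Step 2 --- a decay that the logarithmic factor in \eqref{eq1.2} is exactly strong enough to furnish --- together with the $\exp(c/\gamma)$‑type factor in Step 3, which is tamed only because it appears with the small exponent $\gamma/\sigma$.
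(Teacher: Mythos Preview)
The paper does not prove this statement directly; it is quoted from Grigor$'$yan--Sun \cite{gs13}, and the paper instead proves the stronger Theorem~\ref{main}, which yields Theorem~1.1 via Remark~1.3. Your argument essentially reconstructs the Grigor$'$yan--Sun scheme, which is exactly what underlies the paper's proof of Theorem~\ref{main}: the test function $\psi=\phi^{q}(u+\varepsilon)^{-\gamma}$ of your Step~1 is what produces the a~priori estimate the paper records as Lemma~2.1 (there in a more refined form, with two gradient integrals packaged into $J(t,\phi)$ and with exponent $\sigma$ rather than $\sigma-\gamma$ on $u$), and your small parameter $\gamma\downarrow0$ plays the role of the paper's $t\downarrow0$. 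The only substantive difference is the cutoff: the paper uses the explicit power law $\phi_{t}=(r/R)^{-t}$ with $R=e^{1/t}$, truncated by $\xi_{t,n}$, whereas you use a Varopoulos--Grigor$'$yan optimized dyadic cutoff; both are tuned so that the capacity decay exactly cancels the $\gamma^{-1}$ (resp.\ $t^{-1}$) blow-up, and both consume the logarithmic factor in \eqref{eq1.2} in the same way.

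Two small points of presentation. In Step~2, ``dominated convergence'' should be Fatou's lemma, since $u^{\sigma-\gamma}$ is not dominated by $u^{\sigma}$ on $\{u<1\}$. In Step~3 you implicitly need a \emph{finite} cutoff with $N\sim 1/\gamma$ dyadic shells, so that the capacity bound of Step~2 (which is essentially attained once $N\beta\gtrsim1$, i.e.\ $N\gtrsim1/\gamma$) and the bound $\int\phi^{q\sigma/\gamma}\le V(2^{N}\rho)\lesssim\exp(c/\gamma)$ hold simultaneously; this should be made explicit, since in Step~2 you already let $N\to\infty$. The paper handles the analogous issue by keeping the truncation $\xi_{t,n}$ separate from the power-law profile $\phi_{t}$ and sending $n\to\infty$ first, then $t\to0$.
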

\noindent They also showed that the exponents $\frac{2\sigma}{\sigma-1}$ and $\frac{1}{\sigma-1}$ are sharp.

On the other hand, let us recall that
a manifold $M$ is said to be \emph{parabolic} if a Liouville type theorem holds for nonnegative solution of inequality
$$\Delta u(x)\ls0,\qquad x\in M, $$ i.e.,
any nonnegative weak solution of  $\Delta u\ls0 $ on $M$ must be constant.
 Cheng and Yau \cite{cy75} proved that $V(r)\ls C r^2$, for some $C>0$, is a sufficient condition for parabolicity of $M$. Nowdays, a well-known sharp sufficient condition for parabolicity is the following \emph{integral} condition, which was proved independly by Varopoulos \cite{v81} and Grigor$^\prime$yan \cite{g83,g85}:
$$\int^\infty \frac{r}{V(r)}dr=\infty.$$

Inspired  by Varopoulos-Grigor$^\prime$yan's condition for  the parabolicity of $M$, we ask a natural question: what is a sufficient condition for Liouville type theorem of inequlity \eqref{eq1.1} via an \emph{integral} estimate of $V(r)$? Of course, such a condition should cover the above pointwise condition \eqref{eq1.2}.

In this remark, we solve this question. Our main result states as follows:
\begin{thm}\label{main}
Let $M$ be a complete  Riemannian manifold without boundary. Assume that
\begin{equation}\label{eq1.3}
\liminf_{t\to0^+}t^{\frac{\sigma}{\sigma-1}}\int_1^\infty\frac{V(r)}{r^{\frac{3\sigma-1}{\sigma-1}+t}}dr<\infty.
\end{equation}
Then any nonnegative weak solution of \eqref{eq1.1} is identically equal to 0.
\end{thm}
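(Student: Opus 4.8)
The plan is to run the nonlinear‑capacity scheme of Grigor$^{\prime}$yan--Kondratiev and Grigor$^{\prime}$yan--Sun, but to feed it a radial cutoff shaped in the Varopoulos--Grigor$^{\prime}$yan manner, so that the volume of $M$ enters only through the integral appearing in \eqref{eq1.3}.

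First, reduce to a positive solution. Since $\Delta u\le -u^\sigma\le 0$, a nonnegative weak solution $u$ is superharmonic; being in $W^{1,2}_{\rm loc}$ it is locally bounded, and by the strong minimum principle (equivalently, the weak Harnack inequality for supersolutions of $\Delta u\le 0$) it is either $\equiv 0$ or strictly positive, so we may assume $u>0$ and it suffices to prove $\int_{B(x_0,1)}u^\sigma\,d\mu=0$. The key estimate comes from testing \eqref{eq1.1} with $\psi=\varphi^q u^{-\beta}$, where $\varphi$ is Lipschitz with compact support, $0\le\varphi\le 1$, $\beta\in(0,1)$, and $q$ is a fixed integer with $q>\tfrac{2\sigma}{\sigma-1}$ (this $\psi$ is admissible because $u$ is locally bounded away from $0$ and $\infty$). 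Expanding $\nabla\psi$, using Young's inequality to absorb the favourable term $\beta\int\varphi^q u^{-\beta-1}|\nabla u|^2$, and then Hölder's inequality with exponents $\tfrac{\sigma-\beta}{1-\beta}$ and $\tfrac{\sigma-\beta}{\sigma-1}$ to reinstate $u^{\sigma-\beta}$ on the right, one obtains the \emph{curvature-free} inequality
\[
\int_M u^{\sigma-\beta}\varphi^q\,d\mu\ \le\ \Bigl(\frac{q^2}{2\beta}\Bigr)^{\frac{\sigma-\beta}{\sigma-1}}\int_M\varphi^{\,q-m}|\nabla\varphi|^{m}\,d\mu,\qquad m:=\frac{2(\sigma-\beta)}{\sigma-1}.
\]

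Next, for a parameter $t>0$ take $\varphi$ radial with $\varphi\equiv 1$ on $B(x_0,1)$, $\varphi=\mathrm{dist}(x_0,\cdot)^{-a}$ on $B(x_0,R)\setminus B(x_0,1)$, and $\varphi$ linear down to $0$ on $B(x_0,2R)\setminus B(x_0,R)$, with $a>0$ fixed by $aq+m+1=\tfrac{3\sigma-1}{\sigma-1}+t$. On the middle annulus $\varphi^{q-m}|\nabla\varphi|^{m}=a^m r^{-(aq+m)}$, so by the coarea formula (writing $\int_M f(\mathrm{dist}(x_0,\cdot))\,d\mu=\int_0^\infty f(r)\,dV(r)$), letting $R\to\infty$ — here the finiteness of the integral in \eqref{eq1.3} is used to guarantee $V(r)=o(r^{2\sigma/(\sigma-1)+t})$, which kills the term at infinity — and an integration by parts in $r$,
\[
\int_M\varphi^{\,q-m}|\nabla\varphi|^{m}\,d\mu\ \le\ a^m\Bigl(\frac{2\sigma}{\sigma-1}+t\Bigr)\int_1^\infty\frac{V(r)}{r^{\frac{3\sigma-1}{\sigma-1}+t}}\,dr .
\]
The constraint gives $aq=t+\tfrac{2\beta}{\sigma-1}$; since $m=2\cdot\tfrac{\sigma-\beta}{\sigma-1}$ the powers of $q$ cancel, so $\bigl(\tfrac{q^2}{2\beta}\bigr)^{\frac{\sigma-\beta}{\sigma-1}}a^m=(2\beta)^{-\frac{\sigma-\beta}{\sigma-1}}\bigl(t+\tfrac{2\beta}{\sigma-1}\bigr)^{\frac{2(\sigma-\beta)}{\sigma-1}}$, and the choice $\beta=\beta(t):=\tfrac{(\sigma-1)t}{2}$ makes this $\le C_\sigma\,t^{\frac{\sigma}{\sigma-1}}$ with $C_\sigma$ independent of $t$. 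Combining the two displays (the left side of the first being $\ge\int_{B(x_0,1)}u^{\sigma-\beta(t)}$) yields, for all small $t>0$,
\[
\int_{B(x_0,1)}u^{\sigma-\beta(t)}\,d\mu\ \le\ C_\sigma'\,t^{\frac{\sigma}{\sigma-1}}\int_1^\infty\frac{V(r)}{r^{\frac{3\sigma-1}{\sigma-1}+t}}\,dr .
\]

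Finally, let $t\to 0^+$. Because $\beta(t)\to 0$ and $u$ is bounded on $B(x_0,1)$, dominated convergence gives $\int_{B(x_0,1)}u^{\sigma-\beta(t)}\to\int_{B(x_0,1)}u^\sigma$, so taking the $\liminf$ and invoking \eqref{eq1.3},
\[
\int_{B(x_0,1)}u^\sigma\,d\mu\ \le\ C_\sigma'\,K,\qquad K:=\liminf_{t\to0^+}\Bigl(t^{\frac{\sigma}{\sigma-1}}\int_1^\infty\frac{V(r)}{r^{\frac{3\sigma-1}{\sigma-1}+t}}\,dr\Bigr)<\infty .
\]
If $K=0$ we are done — in particular whenever $\int_1^\infty V(r)\,r^{-(3\sigma-1)/(\sigma-1)}\,dr<\infty$, where one may even take $t=0$ directly: the prefactor then equals $(2\beta)^{\frac{\sigma-\beta}{\sigma-1}}(\sigma-1)^{-\frac{2(\sigma-\beta)}{\sigma-1}}\to 0$ as $\beta\to 0$, forcing the integral to vanish — and then $u\equiv 0$ on $B(x_0,1)$, hence on $M$ by the minimum principle. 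The remaining borderline case $0<K<\infty$ (realised, e.g.\ by $V(r)\asymp r^{2\sigma/(\sigma-1)}(\ln r)^{1/(\sigma-1)}$) is the crux, and the step I expect to be the main obstacle: the displays above then deliver only the a priori finiteness $\int_M u^\sigma\,d\mu\le C_\sigma' K<\infty$ (apply the scheme with base ball $B(x_0,\varrho)$, $\varrho\ge 1$, and let $t\to 0$), and one must then bootstrap — inserting $u\in L^\sigma(M)$ into the right-hand side of the core inequality in place of the crude estimate, and iterating the improved estimate over a sequence of radii chosen by the Varopoulos--Grigor$^{\prime}$yan equilibrium recipe, so that the exponent of $u$ drops by a fixed factor $<1$ at each step while the accumulated constants remain tied to \eqref{eq1.3}, driving the bound to $0$. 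The delicate bookkeeping in this last iteration is where the real work lies; everything preceding it is routine once the test function and the cutoff profile are in place.
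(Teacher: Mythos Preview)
Your setup through ``$u\in L^\sigma(M)$'' is essentially the same nonlinear-capacity argument the paper uses (its Lemma~2.1, inequality~(2.2), with your single $\beta$-family playing the role of the two Grigor$'$yan--Sun exponents), and the power-law cutoff $\varphi\sim r^{-a}$ together with the dyadic/integration-by-parts computation is exactly the mechanism that converts the gradient integral into $\int_1^\infty V(r)\,r^{-(3\sigma-1)/(\sigma-1)-t}\,dr$. Two small corrections: ``$W^{1,2}_{\rm loc}$ implies locally bounded'' is false in general, but you do not need it --- Fatou (not dominated convergence) already gives $\int_{B_1}u^\sigma\le C'_\sigma K$ from the bound on $\int_{B_1}u^{\sigma-\beta(t)}$; and the hypothesis \eqref{eq1.3} only furnishes a \emph{sequence} $t_\alpha\downarrow 0$ with $A(t_\alpha)<\infty$, so your letting $R\to\infty$ must be done along such $t_\alpha$.

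The real gap is the one you flag yourself: the case $0<K<\infty$. Your sketched bootstrap (``the exponent of $u$ drops by a fixed factor at each step'') is not how the paper closes the argument, and as written it is not clear it can be made to work --- the exponent $\sigma$ is fixed, and what must be driven to zero is the \emph{integral}, not an exponent. The paper's device is different and cleaner. It uses the companion inequality to yours (its~(2.1)),
\[
\int_M\phi^s u^\sigma\,d\mu\ \le\ C_0\Big(\int_{M\setminus K}\phi^s u^\sigma\,d\mu\Big)^{\frac{t+1}{2\sigma}}\cdot J(t,\phi),
\]
in which the $u$-integral on the right is over $M\setminus K$ only (this comes for free from your own derivation: the H\"older step lives on $\{\nabla\varphi\neq 0\}\subset M\setminus K$). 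The whole point is then to make the factor $J(t,\phi)$ \emph{uniformly bounded in $t$} while $K$ is allowed to exhaust $M$. With your cutoff based at radius $1$ this fails: shifting the inner radius to $\rho$ produces an extra factor $\rho^{\,aq}$ in the gradient integral. The paper's key choice is to take the inner radius $R=R(t):=e^{1/t}$ and the power-law profile $(r/R)^{-t}$; then $R^{t}=e$, the extra factor is a harmless constant, and one obtains $\limsup_n J(t_\alpha,\phi_{t_\alpha,n})\le C$ uniformly in $\alpha$. Now (2.1) with $K=\overline{B_\rho}$ and $t_\alpha\to 0$ gives $\int_{B_\rho}u^\sigma\le C\big(\int_{M\setminus B_\rho}u^\sigma\big)^{1/(2\sigma)}$, and since you already know $\int_M u^\sigma<\infty$, letting $\rho\to\infty$ forces $\int_M u^\sigma=0$. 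No iteration is needed; the missing idea is the coupling $R=e^{1/t}$ between the inner radius of the cutoff and the parameter $t$.
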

\begin{rem}Condition \eqref{eq1.2} in Theorem 1.1 implies the condition \eqref{eq1.3}. In fact,
$$\eqref{eq1.2}\Longrightarrow t^{\frac{\sigma}{\sigma-1}}\int_1^\infty \frac{V(r)dr}{r^{\frac{3\sigma-1}{\sigma-1}+t}}\ls t^{\frac{\sigma}{\sigma-1}}\int_1^\infty \frac{(\ln r)^{\frac{1}{\sigma-1}}dr}{r^{1+t}}=\Gamma(\frac{\sigma}{\sigma-1}),
$$
where $\Gamma(\cdot)$ is Gamma function.
\end{rem}

\section{Proof of Theorem \ref{main}}
\begin{proof}[Proof of Theorem \ref{main}]
Let $u\in W^{1,2}_{\rm loc}(M)$ be a nontrivial nonnegative solution to the inequality \eqref{eq1.1}.

The proof of Theorem 1.1 in \cite{gs13} contains two main parts. Firstly, the authors derived a useful priori estimate
in terms of a test function and positive parameters (which will be recalled in Lemma 2.1 below).
Secondly, they chose specific test functions to conclude $\int_Mu^\sigma d\mu=0.$
 Our proof of Theorem 1.2 is basically along the same line in \cite{gs13}. The different from Grigor$^{\prime}$yan-Sin's proof will appear in the second part. We will choose a variation of their test functions
 to conclude  $\int_Mu^\sigma d\mu=0.$

Firstly, let us recall the useful priori estimate given in \cite{gs13}. We summarize it as the following lemma:
\begin{lem}[Grigor$^{\prime}$yan-Sun, \cite{gs13}] Set $s=8\sigma/(\sigma-1)$.
Then there exists a constant $C_0>0$ such that the following property holds:

For any
 $$t\in \big(0,\min\{1,\frac{\sigma-1}{2}\}\big),$$
  any nonempty compact set $K\subset M$, and any Lipschitz function $\phi$ on $M$ with conpact support such that $0\ls \phi\ls1$ on $M$ and $\phi\equiv1$ in a neighborhood of $K$, we have
\begin{equation}\label{eq2.1}
\int_M \phi^s u^\sigma d\mu\ls C_0\Big(\int_{M\backslash K}\phi^su^\sigma d\mu \Big)^{\frac{t+1}{2\sigma}}\cdot J(t,\phi)
\end{equation}
and
\begin{equation}\label{eq2.2}
\Big(\int_M \phi^s u^\sigma d\mu\Big)^{1-\frac{t+1}{2\sigma}}\ls C_0\cdot J(t,\phi),
\end{equation}
where
$$J(t,\phi):=t^{-\frac{1}{2}-\frac{\sigma}{2(\sigma-1)}}\Big(\int_M|\nabla\phi|^{2\frac{\sigma-t}{\sigma-1}}d\mu\Big)^{\frac{1}{2}}
\cdot\Big(\int_M|\nabla\phi|^{\frac{2\sigma}{\sigma-t-1}}d\mu\Big)^{\frac{\sigma-t-1}{2\sigma}}.$$
\end{lem}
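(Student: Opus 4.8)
The claim to prove is Lemma 2.1 itself, i.e.\ the a priori inequalities \eqref{eq2.1}--\eqref{eq2.2} for weak solutions of \eqref{eq1.1}. The plan is to start from the definition of a weak solution with a suitable nonnegative test function built from $\phi$ and $u$. The natural choice is $\psi=\phi^s u^{\sigma-1}$ (or a truncation thereof to stay in $W^{1,2}$ with compact support), so that testing \eqref{eq1.1} against $\psi$ produces $\int_M \phi^s u^{2\sigma-1}\,d\mu$ as the "good" term on one side and $\int_M \langle\nabla u,\nabla(\phi^s u^{\sigma-1})\rangle\,d\mu$ on the other. Expanding the gradient gives one term with $|\nabla u|^2$ (coming from $\nabla u^{\sigma-1}$, with a favorable sign $\sigma-1>0$) and one cross term $\phi^{s-1}u^{\sigma-1}\langle\nabla u,\nabla\phi\rangle$. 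The first step is therefore to record this integrated identity and isolate $(\sigma-1)\int_M\phi^s u^{\sigma-2}|\nabla u|^2\,d\mu + \int_M \phi^s u^{2\sigma-1}\,d\mu \ls -s\int_M \phi^{s-1}u^{\sigma-1}\langle\nabla u,\nabla\phi\rangle\,d\mu$, bounding the right side by Cauchy--Schwarz and Young's inequality against the two positive terms on the left with a small parameter, so that the $|\nabla u|^2$ energy is absorbed. This leaves a control of $\int_M\phi^s u^{2\sigma-1}\,d\mu$ (and of the energy $\int_M\phi^s u^{\sigma-2}|\nabla u|^2\,d\mu$) in terms of $\int_M\phi^{s-2}|\nabla\phi|^2 u^{2\sigma-1}\,d\mu$, hence in terms of something like $\int |\nabla\phi|^{2\sigma/(\sigma-1)}$ after Hölder if one trades powers of $u$ appropriately.

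The second step is the key trick that produces the delicate exponent structure involving $t$: instead of estimating $u^\sigma$ directly, one introduces the parameter $t\in(0,\min\{1,(\sigma-1)/2\})$ and writes $\phi^s u^\sigma = (\phi^{s} u^{\sigma})$ and interpolates the desired quantity $\int\phi^s u^\sigma$ against the already-controlled quantities using Hölder's inequality with carefully chosen exponents depending on $t$. Concretely, one expects to combine $\int_M\phi^s u^{2\sigma-1}\,d\mu$ (the "energy-type" term) with the gradient integrals $\int_M|\nabla\phi|^{2(\sigma-t)/(\sigma-1)}\,d\mu$ and $\int_M|\nabla\phi|^{2\sigma/(\sigma-t-1)}\,d\mu$ via a three-term Hölder inequality, the exponents chosen so that the powers of $u$ telescope to $\sigma$ and the powers of $\phi$ stay within $\phi^s$ (this is precisely why $s=8\sigma/(\sigma-1)$ is forced — it must be large enough that all the intermediate powers $\phi^{s'}$ with $s'\ls s$ appearing after Hölder are still bounded by using $\phi\ls1$, and the Lipschitz-cutoff gradient terms require matching the two specified Lebesgue exponents). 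The factor $t^{-1/2-\sigma/(2(\sigma-1))}$ in $J(t,\phi)$ arises from tracking the constants in Young's inequality when the absorption in Step 1 is done with a parameter tuned to $t$, together with the blow-up of the Hölder conjugate exponents as $t\to0$.

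For \eqref{eq2.1} versus \eqref{eq2.2}: inequality \eqref{eq2.1} is the sharper "localized" version where the $L^1$-type norm of $\phi^s u^\sigma$ is paired only with the tail $\int_{M\setminus K}\phi^s u^\sigma$ (using $\phi\equiv1$ near $K$, so $\nabla\phi$ is supported off $K$ and all gradient terms live on $M\setminus K$), raised to the power $(t+1)/(2\sigma)$; inequality \eqref{eq2.2} follows from \eqref{eq2.1} by the crude bound $\int_{M\setminus K}\phi^s u^\sigma\ls\int_M\phi^s u^\sigma$ and rearranging, moving the factor $\big(\int_M\phi^su^\sigma\big)^{(t+1)/(2\sigma)}$ to the left-hand side to get the exponent $1-(t+1)/(2\sigma)$. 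So the plan is: prove \eqref{eq2.1} by the testing-plus-Hölder scheme above, then deduce \eqref{eq2.2} as an immediate corollary.

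\textbf{Main obstacle.} The genuinely delicate point is the bookkeeping of exponents in Step 2: one must verify that there exist admissible Hölder triples making the $u$-powers collapse exactly to $\sigma$ while keeping every $\phi$-exponent in $[0,s]$ and hitting exactly the two gradient norms $2(\sigma-t)/(\sigma-1)$ and $2\sigma/(\sigma-t-1)$ — and crucially that the constraint $t<\min\{1,(\sigma-1)/2\}$ is exactly what keeps these conjugate exponents positive and finite (e.g.\ $\sigma-t-1>0$ needs $t<\sigma-1$, and the $\tfrac12$ absorbed energy needs $t<(\sigma-1)/2$ or similar). Getting the power of $t$ in $J(t,\phi)$ correct — so that later, when this lemma is applied, the $\liminf_{t\to0^+}t^{\sigma/(\sigma-1)}(\cdots)$ in \eqref{eq1.3} matches — requires care, but it is a matter of tracking Young/Hölder constants rather than a conceptual difficulty. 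The sign $\sigma-1>0$ in the $u^{\sigma-2}|\nabla u|^2$ term (needed so that this term can be discarded or absorbed on the correct side) and the positivity $u\gs0$ (needed to apply the test function $\phi^s u^{\sigma-1}$ and to keep all integrands nonnegative) are the only structural hypotheses used, which is why the estimate is curvature-free; no comparison geometry enters.
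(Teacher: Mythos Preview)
The paper itself does not prove this lemma: its entire proof reads ``Inequality \eqref{eq2.1} is Eq.(2.10) in \cite{gs13}, and inequality \eqref{eq2.2} is Eq.(2.11) in \cite{gs13}.'' So your proposal already goes well beyond what the paper does, and a correct sketch would certainly be acceptable in place of the bare citation.

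That said, your outline contains a genuine sign error at the very first step. With the test function $\psi=\phi^{s}u^{\sigma-1}$, the weak formulation $\int u^{\sigma}\psi\ls\int\langle\nabla u,\nabla\psi\rangle$ gives
\[
\int_{M}\phi^{s}u^{2\sigma-1}\,d\mu\ \ls\ (\sigma-1)\int_{M}\phi^{s}u^{\sigma-2}|\nabla u|^{2}\,d\mu
\ +\ s\int_{M}\phi^{s-1}u^{\sigma-1}\langle\nabla u,\nabla\phi\rangle\,d\mu,
\]
which is the \emph{opposite} inequality to the one you wrote: the $(\sigma-1)|\nabla u|^{2}$ term sits on the \emph{right}, not the left, so it cannot be ``discarded or absorbed on the correct side'' as you claim. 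A positive power of $u$ in the test function never works for this class of inequalities.

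The device in \cite{gs13} (and in the Mitidieri--Pohozaev tradition) is to test with $\psi=\phi^{s}(u+\varepsilon)^{-t}$ for a small parameter $t>0$. Then $\nabla\psi$ produces $-t\,\phi^{s}(u+\varepsilon)^{-t-1}|\nabla u|^{2}$, so the energy term lands on the \emph{left} with a positive coefficient $t$, and one obtains
\[
\int_{M}\phi^{s}\frac{u^{\sigma}}{(u+\varepsilon)^{t}}\,d\mu
\ +\ t\int_{M}\phi^{s}\frac{|\nabla u|^{2}}{(u+\varepsilon)^{t+1}}\,d\mu
\ \ls\ s\int_{M}\phi^{s-1}(u+\varepsilon)^{-t}\langle\nabla u,\nabla\phi\rangle\,d\mu.
\]
Young's inequality on the right (with parameter proportional to $t$) absorbs the cross term into the energy; the resulting $1/t$ factors are exactly the origin of the $t^{-1/2-\sigma/(2(\sigma-1))}$ in $J(t,\phi)$. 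After letting $\varepsilon\to0$ and applying H\"older with the pair of exponents $2(\sigma-t)/(\sigma-1)$ and $2\sigma/(\sigma-t-1)$ (these arise when one matches the remaining powers $u^{\sigma-t}$ and $u^{\sigma-t-1}$ against $\phi^{s}u^{\sigma}$), one arrives at \eqref{eq2.1}; your deduction of \eqref{eq2.2} from \eqref{eq2.1} is then correct. In short: the parameter $t$ in the statement is not an interpolation parameter introduced after the fact---it \emph{is} the negative exponent in the test function, and the range $t<\min\{1,(\sigma-1)/2\}$ comes directly from keeping the H\"older exponents admissible in that computation.
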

\begin{proof}Inequality \eqref{eq2.1} is Eq.(2.10) in \cite{gs13}, and inequality \eqref{eq2.2} is Eq.(2.11) in \cite{gs13}.
\end{proof}

In the following, we will consider a family of specific test functions $\phi_n$, which are modifications from original structures in \cite{gs13}.

Fix any $t\in \big(0,\min\{1,\frac{\sigma-1}{2}\}\big)$. We set $R=R(t):=\exp(1/t)$.
 We consider the function
\begin{equation*}
\phi_t(x)=
\begin{cases}1, &r(x)<R,\\
\Big(\frac{r(x)}{R}\Big)^{-t},& r(x)\gs R,
\end{cases}
\end{equation*}
 and a family of functions, for any $n=1,2,3,\cdots,$
\begin{equation*}
\xi_{t,n}(x)=
\begin{cases}1, &0\ls r(x)\ls 2^{n}R,\\
 2-\frac{r(x)}{2^{n}R},& 2^{n}R\ls r(x)\ls 2^{n+1}R,\\
 0,& r(x)\gs 2^{n+1}R.
\end{cases}
\end{equation*}
Consider the functions
\begin{equation}\label{eq2.3}
\phi_{t,n}(x):=\phi_t(x)\cdot \xi_{t,n}(x).
\end{equation}
Then, for each $n=1,2,\cdots$, function $\phi_{t,n}(x)$ is Lipschitz continuous on $M$ and has compact support, and $\phi_{t,n}\equiv1$ on $B_{R(t)}:=B(x_0,R(t))$.

\noindent\textbf{Claim:} \emph{There exists a constant $C_1>0$ such that, for any $t\in\! \big(0,\min\{1,\frac{\sigma-1}{2}\}\big)$ with
$$A(t):=\int_1^\infty \frac{V(r)}{r^{\frac{3\sigma-1}{\sigma-1}+t}}dr<\infty,$$
we have
\begin{equation}\label{eq2.4}
\limsup_{n\to\infty}[J(t,\phi_{t,n})]^{\frac{2\sigma}{2\sigma-t-1}}\ls C_1\cdot t^{\frac{\sigma}{\sigma-1}}\cdot A(t).
\end{equation}
}

\begin{proof}[Proof of Claim:]
In the proof, the parameter $t$ is fixed. To simplify the notations, we denote by
$$\phi:=\phi_t,\quad \xi_n:=\xi_{t,n} \quad {\rm and}\quad \phi_n:=\phi_{t,n}.$$

Notice that
$$
\nabla \phi_{n}=\xi_{n}\cdot\nabla\phi  +\phi \cdot\nabla\xi_{n}.
$$
We have
$$
|\nabla \phi_{n}|\ls \xi_{n}\cdot|\nabla\phi | +\phi \cdot|\nabla\xi_{n}|;
$$
and, by the inequality $(A+B)^a\ls 2^{a-1}(A^a+B^a)$ for all $A,B>0$ and $a\gs1$,
$$
|\nabla \phi_{n}|^a\ls 2^{\frac{4\sigma}{\sigma-1}-1}\big[ \xi_{n}^a\cdot|\nabla\phi |^a +\phi ^a\cdot|\nabla\xi_{n}|^a\big]
$$
for any $a\in [1,\frac{4\sigma}{\sigma-1}]$. In the following, we denote by
 $$\sigma_0:=\frac{4\sigma}{\sigma-1}.$$

Similar as in \cite{gs13}, we need to estimate the integral $\int_M|\nabla \phi_{n}|^ad\mu.$
 For any $a\in [1,\sigma_0]$, we have
\begin{equation}\label{eq2.5}
\begin{split}
\int_M\!|\nabla \phi_{n}|^ad\mu&\ls  2^{\sigma_0-1}\cdot\Big(\int_{M\backslash B_{R }}\!\!|\nabla \phi |^a d\mu+\!\int_{B_{2^{n+1}R}\backslash B_{2^{n}R }}\!\!\phi ^a|\nabla \xi_{n}|^a d\mu\Big)\\
&:= 2^{\sigma_0-1}\cdot \big(I(a)+ I\!I(a,n)\big),
\end{split}
\end{equation}
where $B_R:=B(x_0,R)$, and we have used that $\nabla \phi =0$ in $B_{R}$ and that $|\nabla \xi_{n}|$ supported in $\overline{B_{2^{n+1}R }}\backslash B_{2^{n}R }$.

Before we estimate the above integrals $I(a)$ and $I\!I(a,n)$, we need the following simple (but important) observation:

 \emph{ If the parameter $a\in [1,\sigma_0]$ satisfies
\begin{equation}\label{eq2.6}
a(t +1)\gs t +\frac{2\sigma}{\sigma-1}.
\end{equation}
Then we have
\begin{equation}\label{eq2.7}
\sum_{n=1}^\infty\frac{ V(2^nR )}{\big(2^{n-1}R \big)^{a(t +1)}}
\ls 2\cdot 16^{\sigma_0}\cdot A(t):=C_2\cdot A(t).
\end{equation}
In particular, it implies that
\begin{equation}\label{eq2.8}
\lim_{n\to\infty}\frac{ V(2^nR )}{\big(2^{n-1}R \big)^{a(t +1)}}=0.
\end{equation}}

Indeed, we calculate directly to conclude
\begin{equation}\label{eq2.9}
\begin{split}
\sum_{n=1}^\infty&\frac{ V(2^nR )}{\big(2^{n-1}R \big)^{a(t +1)}}\\
&\quad =4^{a(t +1)}\cdot 2\cdot\sum_{n=1}^\infty\frac{V(2^nR )}{\big(2^{n+1}R \big)^{a(t +1)}}
\cdot \frac{2^{n+1}R -2^nR }{2^{n+1}R }\\
&\quad\ls 4^{a(t +1)}\cdot 2
\cdot\sum_{n=1}^\infty\int_{2^nR }^{2^{n+1}R }\frac{V(r)dr}{r^{a(t +1)+1}}\\
&\quad\ls 2\cdot 16^{\sigma_0}\cdot \int_{1}^{\infty}\frac{V(r)dr}{r^{a(t +1)+1}},
\end{split}
\end{equation}
we we have used that $t<1$, $a\ls \sigma_0$ and that $R=\exp(1/t)>1$. Combining with \eqref{eq2.6} and \eqref{eq2.9}, we can obtain
\begin{equation*}
\sum_{n=1}^\infty\frac{ V(2^nR )}{\big(2^{n-1}R \big)^{a(t +1)}}
\ls 2\cdot 16^{\sigma_0}\int_{1}^{\infty}\frac{V(r)dr}{r^{t+\frac{2\sigma}{\sigma-1}+1}}=2\cdot 16^{\sigma_0}\cdot A(t).
\end{equation*}
Ths is the desired estimate (2.7).

Now let us estimate $I(a)$. Assume that the parameter $a$ satisfies \eqref{eq2.6}, we have
\begin{equation}\label{eq2.10}
\begin{split}
I(a)&=\int_{M\backslash B_{R }}\!\!\!|\nabla \phi |^a d\mu\ls \int_{M\backslash B_{R }}\Big[\frac{t }{R }\cdot\Big(\frac{r}{R }\Big)^{-t -1}\Big]^a d\mu\\
&=e^a\cdot t^a \int_{M\backslash B_{R }}\!\frac{1}{r^{a(t +1)}} d\mu\qquad\quad ({\rm since}\ R ^{t }=e)\\
&=e^a\cdot t^a \cdot\sum_{n=1}^\infty\int_{B_{2^nR }\backslash B_{2^{n-1}R }}\!\frac{1}{r^{a(t +1)}} d\mu\\
&\ls e^a\cdot t^a \cdot\sum_{n=1}^\infty\frac{ V(2^nR )}{\big(2^{n-1}R \big)^{a(t +1)}}\\
&\ls e^{\sigma_0}\cdot C_2\cdot t^aA(t) \qquad \big({\rm by}\ \ a\ls \sigma_0\ {\rm and}\ \ \eqref{eq2.7} \big).
\end{split}
\end{equation}
Let us estimate $I\!I(a,n)$. Assume that the parameter $a$ satisfies \eqref{eq2.6}, we have
\begin{equation}\label{eq2.11}
\begin{split}
I\!I(a,n)&=\int_{B_{2^{n+1}R }\backslash B_{2^{n}R }}\!\!\phi ^a|\nabla \xi_{n}|^a d\mu\\
&\ls \Big(\frac{2^nR}{R})^{-at}\Big(\frac{1}{2^nR}\Big)^a\cdot V(2^{n+1}R)\\
&=R^{at}\cdot\frac{V(2^{n+1}R)}{(2^nR)^{a(t+1))}}\overset{R^t=e}{=}e^a\cdot\frac{V(2^{n+1}R)}{(2^nR)^{a(t+1))}}.
\end{split}
\end{equation}
Combining with \eqref{eq2.8}, \eqref{eq2.11} and that  $a\ls \sigma_0$, we have
\begin{equation}\label{eq2.12}
\lim_{n\to \infty}I\!I(a,n)=0.
\end{equation}
Therefore, according to \eqref{eq2.5},\eqref{eq2.10} and \eqref{eq2.12}, we obtain, for any $a\in [1,\sigma_0]$ satisfying \eqref{eq2.6},
\begin{equation}\label{eq2.13}
\limsup_{n\to\infty}\int_M|\nabla \phi_n|^{a}d\mu\ls  2^{\sigma_0-1}\cdot  e^{\sigma_0}\cdot C_2\cdot t^aA(t):=C_3\cdot t^aA(t).
\end{equation}

We take
$$a_1=2\frac{\sigma-t}{\sigma-1} \qquad {\rm and}\quad  a_2=\frac{2\sigma}{\sigma-t-1}.$$
Then it is easy to check that $a_1,a_2$ satisfy \eqref{eq2.6}. Indeed,
\begin{equation*}
a_1(t+1)=\frac{2\sigma}{\sigma-1}+2t-\frac{2t^2}{\sigma-1}
\gs \frac{2\sigma}{\sigma-1}+t \qquad ({\rm since }\ \ t\ls  \frac{\sigma-1}{2}  )
\end{equation*}
and
\begin{equation*}
a_2(t+1)=\frac{2\sigma}{\sigma-1}\cdot\frac{\sigma-1}{\sigma-t-1}\cdot (t+1)\gs\frac{2\sigma}{\sigma-1}\cdot (t+1)\gs  \frac{2\sigma}{\sigma-1}+t.
\end{equation*}
Now, by using
$$J(t,\phi_n)=t^{-\frac{1}{2}-\frac{\sigma}{2(\sigma-1)}}\Big(\int_M|\nabla\phi_n|^{a_1}d\mu\Big)^{\frac{1}{2}}
\cdot\Big(\int_M|\nabla\phi_n|^{a_2}d\mu\Big)^{\frac{1}{a_2}}$$
and \eqref{eq2.13}, we can conclude that
\begin{equation*}
\begin{split}
\limsup_{n\to\infty}J(t,\phi_n)&\ls  t^{-\frac{1}{2}-\frac{\sigma}{2(\sigma-1)}}\cdot C^{\frac{1}{2}+\frac{1}{a_2}}_3\cdot t^{\frac{a_1}{2}+1}\cdot
 [A(t)]^{\frac{1}{2}+\frac{1}{a_2}} \\
&= C_3^{\frac{2\sigma-t-1}{2\sigma}}\cdot t^{\frac{1}{2}+\frac{\sigma}{2(\sigma-1)}-\frac{t}{\sigma-1}}\cdot[A(t)]^{\frac{2\sigma-t-1}{2\sigma}}
\end{split}
\end{equation*}
Then
\begin{equation}\label{eq2.14}
\begin{split}
\limsup_{n\to\infty}[J(t,\phi_n)]^{\frac{2\sigma}{2\sigma-t-1}}&\ls C_3\cdot
t^{(\frac{1}{2}+\frac{\sigma}{2(\sigma-1)}-\frac{t}{\sigma-1})\cdot\frac{2\sigma}{2\sigma-t-1}}\cdot A(t)\\
&=C_3\cdot t^{\frac{\sigma}{\sigma-1}\cdot(1-\frac{t}{2\sigma-t-1})}\cdot A(t).
\end{split}
\end{equation}
Noticing that
$$\lim_{t\to0^+} t^{-\frac{\sigma}{\sigma-1}\cdot \frac{t}{2\sigma-t-1}}=1,$$
we have that the function $t\mapsto t^{-\frac{\sigma}{\sigma-1}\cdot \frac{t}{2\sigma-t-1}}$ is bounded on $(0,1)$ uniformly.

Set the constant
$$C_1:=C_3\cdot \sup_{0<t<1}t^{-\frac{\sigma}{\sigma-1}\cdot \frac{t}{2\sigma-t-1}}.$$
Then the desired estimate \eqref{eq2.4} follows from \eqref{eq2.14}, and hence the proof of \textbf{Claim} is completed.
\end{proof}

Now let us continue the proof of Theorem 1.2.

According to \eqref{eq1.3},  there is a sequence of numbers $\{t_\alpha\}_{\alpha=1}^\infty$, going to 0, such that
\begin{equation}\label{eq2.15}
t_\alpha^{\frac{\sigma}{\sigma-1}}\cdot A(t_\alpha)=t_\alpha^{\frac{\sigma}{\sigma-1}}\int_1^\infty\frac{V(r)}{r^{\frac{3\sigma-1}{\sigma-1}+t_\alpha}}dr\ls C_4,\qquad \forall \ \alpha=1,2,\cdots
\end{equation}
for some constant $C_4$, independent of $\alpha.$ Without loss the generality,  we can also assume that  $t_{\alpha}\in(0,\min\{1,\frac{\sigma-1}{2}\}),$ for all $\alpha=1,2,3,\cdots.$

By using the above \textbf{Claim}, we have, for each $\alpha=1,2,\cdots$,
\begin{equation}\label{eq2.16}
\limsup_{n\to\infty}\ J(t_\alpha,\phi_{t_\alpha,n})\!\ls\! (C_1\cdot C_4)^{\frac{2\sigma-t_\alpha-1}{2\sigma}}\!\ls\!\max\{(C_1 C_4)^{\frac{2\sigma-1}{2\sigma}},1\}\! :=C_5.
\end{equation}

In the following is similar as in \cite{gs13}. We want to show $u\in L^\sigma(M)$, and moreover $\int_Mu^\sigma d\mu=0.$  Fix arbitrary a nonempty compact set $K\subset M$.

 Notice that $R(t_\alpha)=\exp(1/t_\alpha)\to\infty$ as $\alpha\to\infty$.
So, we have
$$K\subset B_{R(t_\alpha)}$$
for all large enough $\alpha$. Hence, for any sufficient large $\alpha$, $\phi_{t_\alpha,n}\equiv1$ on $K$ holds for any $n=1,2,\cdots.$.
For such $\alpha$, we can apply Lemma 2.1 to $t_\alpha,$ $K$ and function $\phi_{t_\alpha,n}$; and we conclude that
\begin{equation}\label{eq2.17}
 \int_{K}u^\sigma d\mu\!\ls \int_{M}\phi_{t_\alpha,n}^su^\sigma d\mu\!\ls C_0 \Big(\int_{M\backslash K}\!\phi_{t_\alpha,n}^su^\sigma d\mu\Big)^{\frac{t_\alpha+1}{2\sigma}}\cdot  J(t_\alpha,\phi_{t_\alpha,n})
 \end{equation}
and
\begin{equation}\label{eq2.18}
\int_{K}u^\sigma d\mu\ls \int_{M}\phi_{t_\alpha,n}^su^\sigma d\mu \ls\Big(C_0\cdot J(t_\alpha,\phi_{t_\alpha,n})\Big)^{\frac{2\sigma}{2\sigma-t_\alpha-1}},
\end{equation}
for all $n=1,2,\cdots$, where we have used that $\phi_{t_\alpha,n}\equiv1$ on $K$.

By combining \eqref{eq2.16} and \eqref{eq2.18}, we obtain
\begin{equation*}
\int_{K}u^\sigma d\mu \ls\Big(C_0\cdot C_5\Big)^{\frac{2\sigma}{2\sigma-t_\alpha-1}}
\end{equation*}
for all large enough $\alpha$. Letting $\alpha\to\infty$, we have
\begin{equation}\label{eq2.19}
\int_{K}u^\sigma d\mu\ls\Big(C_0\cdot C_5\Big)^{\frac{2\sigma}{2\sigma-1}}:=C_6.
\end{equation}
By combining with\eqref{eq2.17},\eqref{eq2.19},\eqref{eq2.16} and that $\phi_{t_\alpha,n}\ls1$ on $M$, we have
\begin{equation*}
 \int_{K}u^\sigma d\mu\! \ls C_0 \Big(\int_{M\backslash K}\!u^\sigma d\mu\Big)^{\frac{t_\alpha+1}{2\sigma}}\cdot C_5,
 \end{equation*}
for all large enough $\alpha$. Letting $\alpha\to\infty$, we have
\begin{equation}\label{eq2.20}
 \int_{K}u^\sigma d\mu\! \ls C_0 \cdot C_5\cdot \Big(\int_{M\backslash K}\!u^\sigma d\mu\Big)^{\frac{1}{2\sigma}}.
 \end{equation}
By using the arbitrariness of $K$, we can take $K=\overline{B_r}$ for any $r>0$.
Combining with \eqref{eq2.19} and \eqref{eq2.20} and letting $r\to\infty$, we have
$$\int_Mu^\sigma d\mu=0,$$
which implies $u\equiv0$ on $M$, and the proof of Theorem \ref{main} is completed.
\end{proof}

\textbf{Acknowledgements.} We would like to thank Dr. Yuhua Sun for
his interesting in the paper. The author is partially supported by Guangdong Natural Science Foundation S2012040007550 and by China Postdoctoral Science Foundation  2012T50736, 2012M521639.

\end{document}